\newcommand{\be}{\begin{eqnarray}}
\newcommand{\ee}{\end{eqnarray}}
\newcommand{\by}{\begin{eqnarray*}}
\newcommand{\ey}{\end{eqnarray*}}
\newcommand{\bn}{\begin{enumerate}}
\newcommand{\en}{\end{enumerate}}
\newcommand{\ei}{\end{itemize}}
\newtheorem{theorem}{Theorem}
\newtheorem{lemma}[theorem]{Lemma}
\newtheorem{corollary}{Corollary}[section]
\newtheorem{remark}[theorem]{Remark}
\newtheorem{definition}[theorem]{Definition}
\renewcommand{\theequation}{\arabic{section}.\arabic{equation}}
\numberwithin{equation}{section}
\begin{document}
\date{}
\title{\bf On uniqueness of solutions to degenerate nonlinear  Fokker-Planck Equations in Hilbert spaces  \footnote{This work was supported by
the National Natural Science Foundation of China  NSFC No.11771207.
}}
\author{ Xueru Liu\footnote{Corresponding author, 327625941@qq.com }\hskip1cm Xuan Yang\footnote{yangx38800@gmail.com} \hskip1cm Wei Wang\footnote{wangweinju@nju.edu.cn} \\
\texttt{{\scriptsize Department of Mathematics, Nanjing University,
Nanjing, 210023, P. R. China}}}\maketitle
\begin{abstract}
An ${ L^2({\mathbb{R}}^d)}$-valued stochastic  $N$-interacting particle systems  is investigated.   Existence and uniqueness  of  solutions for  the degenerate  nonlinear Fokker-Planck equation for probability measures  that  corresponds to  the mean field limit equation are derived.
\end{abstract}

\textbf{Key Words:} $N$-interacting particle systems;  nonlinear Fokker-Planck equation; asymptotic compactness; unbounded domain.


\section{Introduction}
  \setcounter{equation}{0}
  \renewcommand{\theequation}
{1.\arabic{equation}}
In this paper, we study the following kinetic nonlinear Vlasov-Fokker-Planck equation  on a separable Hilbert space:
\begin{equation}\label{e:MFSPDE}
 {d}\mu_t + v\cdot \nabla_u \mu_t  {d}t= \tfrac{{1}}{\epsilon} \nabla_v \cdot (\gamma v- \triangle u )
\mu_t  {d}t-\tfrac{{1}}{\epsilon} \nabla_v \cdot (F(u, \rho_t)\mu_t {d}t +\tfrac{{1}}{2 \epsilon^2 }Tr(\sigma(u)\sigma^\ast(u)) \triangle_v \mu_t{d}t,
\end{equation}
where  $(\mu_t)_{t\geq 0}$ is a family of probability measures on $H \times H$. We denote by $H= L^2(\mathbb{R}^d)$ the Hilbert space.    Here,  $\rho_t =\int_{H} {d}\mu_t(\cdot ,v)$  denotes the $u$-marginal of $\mu_t$, The constant $\gamma>0$ is the frictional coefficient and the constant $\epsilon $ is  small mass, and
 $F: H \times\mathcal{P}(H) \rightarrow H $ is the driving force of the system,  which arises from an external or interaction potential. In typical applications,
 we assume $F$ has the following structure:
 $$F(u, \rho_t) = (\nabla \Psi)(u)+ (K\star \rho)(u), \\\  (u, \rho_t)\in   H \times\mathcal{P}(H) ,$$
 where  $(K\star \rho)(u)=\int_{H}K(u-u_1) {d}\rho_t(u_1)$,   and functions  $K: H\rightarrow H$, $\nabla \Psi : H\rightarrow H$ are uniformly Lipschitz continuous.
This structure corresponds to the Kolmogorov equation for a nonlinear stochastic differential equation
 \begin{eqnarray}
&&  {d}u_t  = v_t   {d}t, \label{e:SLE-11} \\
&& \epsilon  {d}v_t  = \triangle u_t    {d}t  -\gamma v_t  {d}t + F(u, \rho_t){d}t +
\sigma(u_t) {d}W_t .\label{e:SLE-2}
\end{eqnarray}
Write $\triangle$ the Laplacian on a Hilbert space $H $ with a domain $D(\triangle)$ and   $D(\triangle)= H_0^{1,2} \subset H$.
We denote the space of Hilbert-Schmidt operators $H \rightarrow H$ by $L_2(H,H)$, endowed the inner product $\langle A, B\rangle_{L_2(H,H)}= Tr_H[A^\ast B]= Tr_H[BA^\ast]$.
 Function  $\sigma :H \rightarrow L_2(H,H) $ is uniformly Lipschitz continuous and $W_t$ is  standard cylindrical  Wiener process on $H$, defined on a completed  probability basis space $(\Omega,\mathcal{F},\{ \mathcal{F}_t   \}_{t\geqslant 0},\mathbb{P} )$.

\noindent {{\bf Motivation of (\ref{e:MFSPDE}) from interacting particle systems:}} The kinetic nonlinear Vlasov-Fokker-Planck equation   (\ref{e:MFSPDE}) is closely
related to classical Newton dynamics for  $N$-interacting particle systems. More precisely, under suitable assumptions on $F$ and $\sigma$, (\ref{e:MFSPDE})  can be
derived from the following system of stochastic differential equations
\begin{eqnarray}\label{e:X-eps-1}
 {d}u_t^{i,N} &=& v_t^{i,N}  {d}t ,  \quad i=1,2,3,\cdot\cdot\cdot,N  \label{e:X-eps-1} \\
\epsilon  {d}v_t^{i,N} &=& \triangle u_t^{i,  N}  {d}t
-\gamma v_t^{i, N}  {d}t + \frac{ {1}}{N}\sum_{j=1}^N K( u_t^{i,N}- u_t^{j,N}) dt + (\nabla \Psi)(u_t^{i,N}) dt +
\sigma(u_t^{i,N})  {d}W_t^i . \label{e:X-eps-2}
\end{eqnarray}
Here, $u^{i, N}_t$ is the position of particle $i$ at time $t$.   $(W_t^1, W_t^2..., W_t^N)$ be $N$-independent standard cylindrical  Wiener process on $H$, defined on a completed  probability basis space $(\Omega,\mathcal{F},\{ \mathcal{F}_t   \}_{t\geqslant 0},\mathbb{P} )$.  By considering the mean-field limit  $N\rightarrow\infty$, the so-called nonlinear  Mckean-Vlasov stochastic differential equation (\ref{e:SLE-11})-(\ref{e:SLE-2}) replaces the system (\ref{e:X-eps-1})-(\ref{e:X-eps-2}), where
$\rho_t= Law(u_t)$ is the law of $u_t$, and $\mu_t=Law(u_t, v_t)$ satisfies  (\ref{e:MFSPDE})   in the sense of distributions. There has been a surge of activity for  stochastic $N$-particle system  of research  in finite dimensional space(\cite{AGBG},\cite{david},\cite{eweinan}).
It is particularly worth mentioning that
Liu and  Wang\cite{lw} consider the   interacting particles system (\ref{e:X-eps-1})-(\ref{e:X-eps-2}) with small mass in ${ L^2({\mathbb{R}}^d)}$. For fixed $\epsilon$, they prove that the solution to $(\ref{e:X-eps-2})$ converges to  that  $(\ref{e:SLE-2})$ uniformly for small mass $\epsilon$ of in the following sense
$$\lim_{N \rightarrow \infty}\mathbb{E} \|u^{i, N, \epsilon}_t-u_t^{i, \epsilon }\|^2_H =0. $$
In this paper, we show the limit of the following statistical quantities   given by the empirical measure
\begin{equation*}
 \Gamma_t^{N}:= \frac{ {1}}{N}\sum_{i=1}^N \delta_{(u_t^{i,N} ,v_t^{i,N}) }\,,
\end{equation*}
as $N\rightarrow\infty$\, by showing the well-posedness of the (\ref{e:MFSPDE}). Suppose that the empirical measure $ \Gamma_0^{N,\epsilon}:= \frac{\mathrm{1}}{N}\sum_{i=1}^N \delta_{u_0^{i} ,v_0^{i} } $
converges to a random probability measure $\mu_0$  in the metric $\mathbb{E}[W_1(\cdot,\cdot)]$, where $W_1$ is  1-Wasserstein metric, seeing the Definition \ref{def:P1} in section 2.

(\ref{e:MFSPDE}) is also called nonlinear Fokker-Planck equation{\cite{G.DP}} on infinite dimensional space $H_0^{1,2}\times H$. Nonlinear Fokker-Planck equations have been studied  in a variety of finite dimensional space. Papers by  McKean(\cite{Mh.p.1},\cite{Mh.p.2})  concerned with nonlinear parabolic equations. Such equations and the well-posedness of the martingale problem were studied by Funaki\cite{Funaki}.  Physical problems relating to nonlinear Fokker-Planck equations can be found in  \cite{Frank} and \cite{Bogachev}. Existence and uniqueness of solutions for such equations for measures were investigated(\cite{Manita3}, \cite {Manita2}).   In infinite dimensional case, Cauchy problem for the nonlinear Fokker-Planck-Kolmogorov equations for probability measures was studied by on a Hilbert space\cite{Manita1}. The work\cite{VIG} established the existence of solutions for nonlinear evolution equations for measures. For  interacting system,  Bhatt\cite{AGBG} studied such equations by solving martingale problems corresponding McKean-Vlasov equation on Hilbert spaces. In all the aforementioned papers, the nonlinear Fokker-Planck equations are non-degenerate. In our paper, we deal with the nonlinear Fokker-Planck equations(\ref{e:MFSPDE}), which is degenerate. We use the classical Holmgren method(\cite{Manita1},\cite{Manita2}) to show the uniqueness.

The rest of this paper is organized as follows. Some notations, assumptions and definition are introduced by Section 2.
In Section 3, we show the  well-posedness for the nonlinear Fokker-Planck equation(\ref{e:MFSPDE}).

\section{Preliminary}
  \setcounter{equation}{0}
  \renewcommand{\theequation}
{2.\arabic{equation}}

Let $\{  e_i\}_{i\in N}\subset H^2$ be the complete orthogonal basis of $H$.  Let $P_N$ be the orthogonal projection of $H$
onto $H_N= span\{e_1,...,e_N\}\cong \mathbb{R}^N$, For every $u\in H$, let $u_N$ denote the orthogonal projection of $u$ to $\mathbb{R}^N$, i.e., $u_N= P_Nu$.
Suppose that constants  are change during the proof of the result. Let $\mathcal{H}= H \times H $, and $\{  \hat{e}_i\}_{i\in N}$ be the complete orthogonal basis of $\mathcal{H}$. Now, we  introduce the usual test function space $\mathcal{F}C_0^\infty (\mathcal{H})$\cite{ZMZa}  on $\mathcal{H}$ consisting of finitely based smooth bounded functions,
$$\mathcal{F}C_0^\infty (\mathcal{H}):= \{\psi(l_1,...l_m) \mid   l_1,...l_m \in \mathcal{H}, \psi\in C_0^\infty(\mathbb{R}^{2m}) \}.$$

\begin{definition}\label{def:P1}
The metric space  $(\mathcal{P}_1(\mathcal{H}) ,W_1) $ is the space of probability measure $\mu(\cdot)$ on $\mathcal{H}$ with finite 1-moment, that is,
\begin{equation*}
 \int_{\mathcal{H}} {d}\mu(z)=1  , \quad   M_1( \mu):= \int_{\mathcal{H}}|z|  {d}\mu(z)<\infty\,,
\end{equation*}
endowed with the 1-Wasserstein metric
\begin{equation*}
W_1(\mu,\nu):= \sup \{\int f(z)(\mu-\nu)(dz): f \in \mathcal{F}C_0^\infty(\mathcal{H}), |\nabla f| \leq 1  \}.
\end{equation*}
\end{definition}

Let $Z_t =(u_t,  v_t)$,  $\tilde{A}Z_t= 1/\epsilon (v_t, \triangle u_t -\gamma v_t )  $
$\tilde{B}(Z_t, \mu_t)= 1/\epsilon (0, F(u, \rho_t))  $, $\tilde{\sigma}(Z_t)= 1/\epsilon (0, \sigma(u_t))$.
In this paper,  without loss of generality,  we take $\epsilon$=1. Then, the equation~(\ref{e:SLE-11})--(\ref{e:SLE-2}) is equivalent to the following equation
$${d}Z_t= \tilde{A}Z_t {d}t + \tilde{B}(Z_t, \mu_t)  {d}t +\tilde{\sigma}(Z_t) {d}W_t.$$
Let $\phi \in C_b^2({H}_0^{1,2} \times H)$, set
\begin{eqnarray}
L_{\mu}\phi &=& Tr(\tilde{Q}(z)D^2\phi ) +  \langle   \tilde{A}z+ \tilde{B}(z,\mu) , D\phi  \rangle , \nonumber\\
&:=&  \sum_{i,j=1}^{\infty}\tilde{a}^{ij}(z)\partial^2_{\hat{e}_i\hat{e}_j}\phi +  \sum_{i=1}^{\infty}\tilde{B}^i(z,\mu)  \partial_{\hat{e}_i}\phi +  \sum_{i=1}^{\infty} \tilde{A}^iz\partial_{\hat{e}_i}\phi,
\end{eqnarray}
here $z=(u,v)$, $ D\psi=(D_u\psi, D_v\psi)$, $D^2\psi =(\triangle_u\psi, \triangle_v\psi)$, $ \tilde{Q}(z)=(0, 1/2Tr(\sigma(u)\sigma^\ast(u)))$.
For fixed $\psi \in \mathcal{F}C_0^\infty(\mathcal{H})$,  that is $\psi \in C_0^\infty(\mathbb{R}^{2m})$, then
$$ L_{\mu}^m\psi = \sum_{i,j=1}^{2m}\tilde{a}^{ij}(z)\partial^2_{\hat{e}_i\hat{e}_j}\psi +  \sum_{i=1}^{2m}\tilde{B}^i(z,\mu)  \partial_{\hat{e}_i}\psi +  \sum_{i=1}^{2m} \tilde{A}^iz\partial_{\hat{e}_i}\psi . $$
Hence the nonlinear  Fokker-Planck equation (\ref{e:MFSPDE}) is equivalent to the following equation
\begin{eqnarray}\label{FPE-1}
\partial_t \mu_t + \nabla_z \cdot (  \tilde{A}z+    \tilde{B}(z,\mu_t))\mu_t =  Tr(\tilde{Q}(z)\triangle\mu_t),
\end{eqnarray}
here  $ \nabla_z \cdot \tilde{A}z \mu_t=     v\cdot \nabla_u \mu_t- \nabla_v \cdot (\gamma v- \triangle u )
\mu_t  {d}t $,   $ \nabla_z \cdot \tilde{B}(z,\mu_t) = \nabla_v \cdot (F(u, \rho_t) \mu_t  $,   and $Tr(\tilde{Q}(z)\triangle\mu_t) =   1/2Tr(\sigma(u)\sigma^\ast(u))\triangle_v\psi$.  \\
For each $N \in \mathbb{N}$, let $\tilde{A}_Nz = P_N\tilde{A}z := \{\tilde{A}^iz\}_{1\leq i\leq N}$, $\tilde{B}_N(z,\mu)= P_N\tilde{B}(z,\mu) :=\{\tilde{B}^{i}(z,\mu)\}_{1 \leq {i} \leq N} $ and $\hat{P}_N \tilde{Q}(z)= (0, Q_N(u))=(0, a^{i,j}(u))_{1 \leq {i,j} \leq N} := (\tilde{a}^{ij}(z))_{1 \leq {i,j} \leq N}$.

Now, we introduce the following assumptions.\\
\noindent {\bf \textbf{H$_1$}}
(1) There exist constants $L_\sigma$ and $L$, such that for every $T>0$,
 $$\|\sigma(u_1)-\sigma(u_2)\|_{L_2(H,H)}  \leqslant  L_\sigma \|u_1-u_2\|_H,   \\\ \\\ \|\sigma(u)\|_{L_2(H,H)} \leqslant  L( 1+ \|u\|_H).$$
~~~~~(2)The operator $Q(u)= 1/2 \sigma(u)\sigma(u)^\ast$, for every $k \in \mathbb{N}$, the matrix $\hat{P}_kQ$ take out the $k \times k$ matrix from  $Q(u)$, and we write  $\hat{P}_kQ= Q_k(u)=(a^{i,j}(u))_{1 \leq {i,j} \leq k}$, which is  symmetric and nonnegative definite.  $Q_k(u)$ has uniformly bounded elements with uniformly bounded first derivatives. Moreover, it is strictly elliptic: there exists $\theta$ such that for every $k \in \mathbb{N}$, $u\in  {H}$, $\langle Q_k(u)\xi, \xi  \rangle \geq \theta |\xi|^2,$ for all $\xi \in\mathbb{R}^{k}$.

 \noindent {\bf \textbf{H$_2$}} There exist constants $L_K$ and $K$, such that
 $$\|K(u_1)-K(u_2)\|_{H}  \leqslant  L_K\|u_1-u_2\|_H,   \\\ \\\ \|K(u)\|_{H} \leqslant  K( 1+ \|u\|_H).$$

 \noindent {\bf \textbf{H$_3$}} There exist constants $L_\Psi$ and $\hat{L}_\Psi$, such that
 $$\|\Psi(u_1)-\Psi(u_2)\|_{H}  \leqslant  L_\Psi\|u_1-u_2\|_H,   \\\ \\\ \|\Psi(u)\|_{H} \leqslant  \hat{L}_\Psi( 1+ \|u\|_H).$$

\begin{remark}
For every  $\mu \in P_1(\mathcal{H})$, there exists a constant $\alpha$ such that, for all  $z_1, z_2 \in \mathcal{H}$ and $t \in[0,T]$,
$$\langle  \tilde{B}(z_1,\mu ) -\tilde{B}(z_2,\mu ) , z_1-z_2 \rangle \leq \alpha|z_1-z_2|^2.$$
\end{remark}

\begin{definition}\label{def-1}
We say that $\mu_t = (\mu_t)_{t\in[0,T]}$ is a solution to the equation~(\ref{FPE-1}), if for every ${t\in[0,T]}$ and  $\varphi \in \mathcal{F}C_0^\infty (\mathcal{H})$,
$$ \int \varphi  {d}\mu_t = \int \varphi  {d}\mu_0 + \int_0^t \int L_{\mu}\varphi  {d}\mu_s  {d}s .$$
Sometime it is convenient to use an equivalent definition(see~\cite{Manita1}), assume that a test function $\Phi$ depends on a finite set of variables $z_1, z_2,..., z_m$, vanishes outside some ball in $H_m \oplus H_m  \cong \mathbb{R}^{2m},$  and $\Phi \in C^{2,1}(\mathbb{R}^{2m}\times (0,T))\cap C(\mathbb{R}^{2m}\times [0,T))$, for every ${t\in[0,T]}$
\begin{eqnarray}\label{DEF-1}
\int \Phi (z,t) {d}\mu_t =  \int \Phi (z,0) {d}\mu_0 + \int[\partial_s\Phi+ L_{\mu}\Phi ]  {d}\mu_s  {d}s.
 \end{eqnarray}
\end{definition}
Given a continuous strictly positive function $V = 1+ |Z|^2$ on $\mathcal{H}$, and $T>0$. Define
$$M_T(V):= \{ \mu =(\mu_t)_{t\in[0,T]}\in \mathcal{P}_1(\mathcal{H}): \sup_{t\in[0,T]}\int V(Z) {d}\mu_t(Z)< +\infty \}.$$
Then for all $\mu\in M_T(V)$ and $Z \in  \mathcal{H} $, there are constant $\Lambda_1$ and $\Lambda_1$ such that
$$L_\mu V(Z,t)\leq \Lambda_1+\Lambda_2 V(Z).$$
We say that a sequence $\mu^n= (\mu_t^n)_{t\in[0,T]}$ from the class $M_T(V)$ is V-convergent to a measure $\mu_t$ if for all $t\in[0,T]$
$$\lim_{n\rightarrow \infty }\int F(Z) {d}\mu_t^n(Z)= \int F(Z) {d}\mu_t(Z),$$
for every $F(Z)\in C(\mathcal{H})$, and such that $$\lim_{R\rightarrow \infty} \sup_{Z\in \mathcal{H} \setminus {B_R}} F(Z)\cdot V^{-1}(Z)=0,$$
here, $B_R = \{z \big|  \|Z\|_\mathcal{H}<R \}$. Obviously, if a sequence $\mu_t^n$ is weakly convergent,  it is V-convergent.

\begin{remark}\label{remark-n}
For fixed  $T>0$, the function  $ \tilde{B}(z_t,\mu)$ is Borel measurable on $t \in[0,T]$, and for every cylinder $\hat{H} \subset \mathcal{H}$ with a compact finite dimensional base, the function $ \tilde{B}(z_t,\mu)$   is  bounded on $\hat{H}$ uniformly in  $\mu \in  M_T(V)$ and $t\in [0,T]$. Moreover, if a sequence $\mu_t^n \in M_T(V)$ is V-convergent to a measure $\mu_t \in M_T(V)$. Then, for all $z= (u,v) \in \mathcal{H}, t\in [0,T],$
$$ \lim_{n\rightarrow \infty }\int_{\mathcal{H}}K(u-u_1) {d}\mu_t^n(u_1,v) = \int_{\mathcal{H}} K(u-u_1) {d}\mu_t(u_1,v).$$
\end{remark}

\section{Nonlinear Fokker-Planck Equations: Well-posedness}
  \setcounter{equation}{0}
  \renewcommand{\theequation}
{3.\arabic{equation}}

In this section, we show the existence and uniqueness  of the nonlinear Fokker-Planck equation(\ref{e:MFSPDE}).


\begin{lemma}\label{thm:main1}
Given $T>0$. Assume  {\bf H$_1$}-{\bf H$_3$} hold.  The nonlinear Fokker-Planck equation (\ref{FPE-1}) has a solution $(\mu)_{t\in[0,T]}\in M_T(V)$ in the sense of
Definition \ref{def-1}.
\end{lemma}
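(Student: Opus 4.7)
The plan is to build $\mu_t$ by a Galerkin projection onto $H_N\times H_N$ followed by a tightness/compactness argument in the $V$-convergence topology of $M_T(V)$, in the spirit of \cite{Manita1} and \cite{AGBG}. First, for each $N\in\mathbb{N}$ I would introduce the projected finite-dimensional McKean--Vlasov SDE on $\mathbb{R}^{2N}$
\begin{eqnarray*}
du^N_t&=&v^N_t\,dt,\\
dv^N_t&=&\bigl(P_N\triangle u^N_t-\gamma v^N_t+P_N F(u^N_t,\rho^N_t)\bigr)dt+P_N\sigma(u^N_t)\,dW^N_t,
\end{eqnarray*}
where $\rho^N_t=\mathrm{Law}(u^N_t)$. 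Under \textbf{H$_1$}--\textbf{H$_3$} all coefficients are Lipschitz in $(z,\mu)$ with respect to $|\cdot|\otimes W_1$, so a Sznitman-type fixed-point argument delivers a unique strong solution whose law $\mu^N_t$ satisfies, for every $\psi\in\mathcal{F}C_0^\infty(\mathcal{H})$ based on the first $m\le N$ coordinates,
$$\int\psi\,d\mu^N_t=\int\psi\,d\mu^N_0+\int_0^t\!\!\int L^{\,m}_{\mu^N_s}\psi\,d\mu^N_s\,ds.$$

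Next I would derive uniform moment estimates. Applying It\^o's formula to a smooth truncation of $V(z)=1+|z|^2$ and invoking the bound $L_\mu V\le \Lambda_1+\Lambda_2 V$ from Section 2, Gr\"onwall's inequality gives
$$\sup_{N}\sup_{t\in[0,T]}\int V(z)\,d\mu^N_t(z)<\infty.$$
A parallel computation on increments of $t\mapsto\int\psi\,d\mu^N_t$ yields equicontinuity in $t$ for each fixed cylindrical $\psi$. Combined with the tightness criterion on $\mathcal{P}_1(\mathcal{H})$, this produces a subsequence along which $\mu^{N_k}_t$ is $V$-convergent to some $\mu_t\in M_T(V)$ for every $t$. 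To pass to the limit, note that for $N\ge m$ the expression $L^{\,m}_{\mu^N_s}\psi$ involves only finitely many components of $\tilde A z$, $\tilde B(z,\mu^N_s)$ and $\tilde Q(z)$; the convergence of the convolution term $\tilde B(\cdot,\mu^N_s)\to\tilde B(\cdot,\mu_s)$ is exactly Remark~\ref{remark-n}, while the boundedness of the entries of $Q_m$ from \textbf{H$_1$}(2) handles the diffusion contribution via dominated convergence.

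The main obstacle is the unbounded, degenerate drift term $\langle\triangle u,D_v\psi\rangle$, since a priori $V$ only controls $\|u\|_H$ whereas $\triangle u$ is defined only on $H_0^{1,2}$. This is resolved by the cylindrical structure of the test functions: because $\{e_i\}\subset H^2$, the pairing reduces via self-adjointness of $\triangle$ to a finite sum $\sum_{j=1}^{m}\langle u,\triangle e_j\rangle\,\partial_{v_j}\psi$, each term bounded by $\|u\|_H\,\|\triangle e_j\|_H\,\|\partial_{v_j}\psi\|_\infty$. Hence the $V$-moment bound is sufficient for both the a priori estimate and the $V$-convergence passage to the limit, so the scheme closes. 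The remaining technical care lies in justifying the truncation of $V$ when applying It\^o's formula on the infinite-dimensional base and in checking that the $N$-dimensional weak formulation is equivalent to the cylindrical weak formulation of Definition~\ref{def-1}; both are routine once the moment bound is established.
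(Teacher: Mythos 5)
Your proposal follows essentially the same route as the paper: Galerkin projection to a finite-dimensional Fokker--Planck equation on $\mathbb{R}^{2N}$, a uniform moment/equicontinuity estimate yielding a $V$-convergent subsequence, and passage to the limit in the cylindrical weak formulation using Remark~\ref{remark-n} for the nonlinear convolution term and uniform convergence of the coefficients on compact sets. The only substantive difference is that the paper obtains existence for the projected nonlinear equation by citing \cite{YOUNG-PIL}, whereas you construct the finite-dimensional solution as the law of a projected McKean--Vlasov SDE via a Sznitman fixed point; both are legitimate under {\bf H$_1$}--{\bf H$_3$}, and your explicit treatment of the uniform $V$-moment bound and of the $\langle\triangle u, D_v\psi\rangle$ pairing (reduced through $e_j\in H^2$ to $\langle u,\triangle e_j\rangle$) supplies details the paper leaves implicit.
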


\begin{proof}
We construct a solution to  (\ref{FPE-1}) as a certain limit of solution to finite dimensional problems.
for each $N \in \mathbb{N},$  consider
$$\hat{Q}_N: z \rightarrow (\tilde{a}^{ij}(P_Nz))_{1 \leq {i,j} \leq N},$$
and
$$  \hat{A}_N: z \rightarrow (\tilde{A}^iP_Nz)_{1 \leq {i} \leq N}, \\\ \hat{B}_N: (z,\mu)\rightarrow ( \tilde{B}^{i}(P_Nz,\mu))_{1 \leq {i} \leq N},$$
here $P_Nz=\{z_1,...,z_N  \}$.  Let $L_\mu^N =  (\hat{A}_N+\hat{B}_N)\partial_{z_N} + Tr\hat{Q}_N \partial^2_{z_N}$, $z_N = P_Nz $, then the finite dimensional Fokker-Planck equation
\begin{equation}\label{FPE-2}
\partial_t \mu_t + \nabla \cdot ( \hat{A}_Nz+\hat{B}_N(z,\mu_t))\mu_t =  Tr( \hat{Q}_N \triangle\mu_t), \\\ \\\ \mu_0^N= \mu_0\circ P_N^{-1}.
\end{equation}
has a solution $\mu^N= (\mu_t^N)_{t\in[0,T]}$\cite{YOUNG-PIL}. We consider solution $(\mu_t^N)_{t\in[0,T]}$ as
measures on $\mathcal{H}$, let  $\mu_t^N(U\times V)=0 $ for every $U\subset \mathbb{R}^{2N}$ and nonempty $V \subset H \setminus {\mathbb{R}^{2N}}$.

Fix a function $\varphi(z) = \varphi(z_1, z_2 , ... , z_m )\in \mathcal{F}C_0^\infty(\mathcal{H}),$ and it has compact support $S\subset \mathbb{R}^{2m}$. For every $N\geq m $,
\begin{equation}\label{FPE-3}
\int_S \varphi  {d}\mu_t^N - \int_S \varphi  {d}\mu_0^N = \int_0^t \int_S L_\mu^N  \varphi  {d}\mu_s^N  {d}s ,
\end{equation}
and
$$| \int_S \varphi  {d}\mu_t^d- \int_S \varphi  {d}\mu_s^d| \leq C(\Lambda_1,  \Lambda_2 , \varphi)|t-s|.$$
Hence there exists a subsequence such that $\mu_t^{n_k}$ is a V-convergent to $\mu_t$ on $\mathcal{H}\times [0,T]$ as $k\rightarrow \infty$. Moreover,  $\mu_t^{n_k}$ converges weakly to $\mu_t$ for all $t\in[0,T]$, and $\mu_0^{n_k}$ converges weakly to   $\mu_0$. That is
$$ \int \varphi  {d}\mu_t^{n_k}\rightarrow \int \varphi  {d}\mu_t, \\\ \\\
\int \varphi  {d}\mu_0^{n_k}\rightarrow \int \varphi  {d}\mu_0. $$
Notice that Remark {\ref{remark-n}}, then by the Arzel\`{a}-Ascoli theorem, the sequences $B^{i}(z, \mu^{n_k})$ uniformly converge to  $B^{i}(z ,\mu)$ on compact sets in  $\mathcal{H}\times [0,T]$.
Clearly,
\begin{eqnarray}\label{FPE-4}
| \int_0^t \int L_\mu^{n_k}  \varphi {d}\mu_s^{n_k}   {d}s - \int_0^t \int L_\mu  \varphi {d}\mu_s {d}s|
 &\leq&|\int_0^t \int_S( L_\mu^{n_k}  \varphi  -  L_\mu  \varphi ){d}\mu_s^{n_k} {d}s| \nonumber\\
 &+& |\int_0^t \int_S L_\mu  \varphi {d}\mu_s^{n_k}  {d}s- \int_0^t \int_S L_\mu  \varphi {d}\mu_s {d}s|.
\end{eqnarray}
For (\ref{FPE-4}), by the uniform convergence of the coefficients, the first term on the right side tends to zero. On the other hand, $\mu_t^{n_k}(dz)$ converges weakly to $\mu_t(dz)$ for all $t\in[0,T]$, the second terms on the right side tends to zero.\\
Therefore, replacing $N$ by $n_k$ for (\ref{FPE-3}),  taking limit as $k\rightarrow +\infty,$ then
$$\int \varphi d \mu_t - \int \varphi d \mu_0  = \int_0^t \int L_\mu  \varphi d\mu_s {d}s .$$
The proof is complete.
\end{proof}

\begin{theorem}\label{thm:main2}

Given $T>0$. Assume  {\bf H$_1$}-{\bf H$_3$} hold.  Then the Fokker-Planck equation (\ref{FPE-1}) has a unique solution $(\mu_t)_{t\in[0,T]}$ in the sense of
Definition \ref{def-1}.

\end{theorem}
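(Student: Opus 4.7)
The plan is to prove uniqueness by the Holmgren duality method, as hinted in the introduction. Suppose $\mu^1=(\mu^1_t)_{t\in[0,T]}$ and $\mu^2=(\mu^2_t)_{t\in[0,T]}$ are two solutions in $M_T(V)$ with the common initial datum $\mu_0$. Fix $T'\in(0,T]$ and any $\varphi\in\mathcal{F}C_0^\infty(\mathcal{H})$ depending on finitely many coordinates $z_1,\dots,z_m$ with $|\nabla\varphi|\leq 1$. I first construct a test function $\Phi(z,t)$ on $\mathbb{R}^{2m}\times[0,T']$ by solving the finite-dimensional backward Kolmogorov equation
$$\partial_t \Phi + L_{\mu^1}^m\Phi = 0 \quad\text{in }\mathbb{R}^{2m}\times[0,T'),\qquad \Phi(\cdot,T')=\varphi,$$
where the coefficients are frozen along the first solution $\mu^1$. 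The strict ellipticity of $Q_m$ in assumption \textbf{H}$_1$(2), together with the Lipschitz and linear-growth controls on $\tilde{A}z$, $\tilde{B}(z,\mu^1_s)$ and $\tilde{a}^{ij}$ inherited from \textbf{H}$_1$--\textbf{H}$_3$, yields a classical $C^{2,1}$ solution $\Phi$ together with a gradient bound $\|\nabla_z\Phi(\cdot,t)\|_\infty\leq C_{T}$ depending only on $T$ and the Lipschitz constants, via standard Bismut-type or parabolic maximum-principle estimates on $\nabla\Phi$.

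Once $\Phi$ is in hand, I plug it into the equivalent definition (\ref{DEF-1}) for each of $\mu^1$ and $\mu^2$ and subtract. Because the diffusion $\tilde{Q}$ and linear drift $\tilde{A}z$ are independent of the measure, the two Kolmogorov operators differ only through the interaction drift, and one obtains
$$\int\varphi\,d\mu^1_{T'}-\int\varphi\,d\mu^2_{T'} = \int_0^{T'}\!\!\int\bigl\langle \tilde{B}(z,\mu^1_s)-\tilde{B}(z,\mu^2_s),\,D\Phi(z,s)\bigr\rangle\,d\mu^2_s(z)\,ds.$$
Using the structural form $F(u,\rho)=\nabla\Psi(u)+(K\star\rho)(u)$ together with \textbf{H}$_2$, the $\Psi$-term cancels, and the Lipschitz bound on $K$ together with the dual characterisation of $W_1$ yields $\|\tilde{B}(z,\mu^1_s)-\tilde{B}(z,\mu^2_s)\|_H\leq L_K\,W_1(\mu^1_s,\mu^2_s)$ pointwise in $z$. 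Combining with the uniform control on $D\Phi$ gives
$$\Bigl|\int\varphi\,d\mu^1_{T'}-\int\varphi\,d\mu^2_{T'}\Bigr|\leq L_K C_T\int_0^{T'}W_1(\mu^1_s,\mu^2_s)\,ds.$$
Taking the supremum over $\varphi\in\mathcal{F}C_0^\infty(\mathcal{H})$ with $|\nabla\varphi|\leq 1$ (and finite-dimensional base, which is dense in the class defining $W_1$) delivers a Grönwall-type inequality for $W_1(\mu^1_{T'},\mu^2_{T'})$, and consequently $W_1(\mu^1_t,\mu^2_t)=0$ for all $t\in[0,T]$.

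The main obstacle is the construction of $\Phi$ with the properties required by Definition \ref{def-1}: the definition demands test functions that vanish outside a ball in $\mathbb{R}^{2m}$, while solutions of the backward equation on an unbounded domain with unbounded drift $\tilde{A}z=(v,\Delta u-\gamma v)$ do not have compact support. I intend to handle this exactly as in the Manita--Shaposhnikov references cited: approximate $\Phi$ by $\Phi_R=\chi_R\Phi$ using a smooth radial cutoff $\chi_R$ supported in a ball of radius $R$ in $\mathbb{R}^{2m}$, control the commutator terms $[L^m_{\mu^i},\chi_R]\Phi$ using the finite first moment bound $\sup_{t\leq T}\int(1+|z|^2)\,d\mu^i_t<\infty$ guaranteed by $\mu^i\in M_T(V)$, and let $R\to\infty$. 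A second subtlety is that the basis $\{e_i\}$ must be chosen so that $\tilde{A}$ preserves $P_N\mathcal{H}$ (e.g.\ eigenfunctions of $\Delta$), otherwise the finite-dimensional operator $L^m_{\mu^1}$ is not well-defined on functions of the first $m$ coordinates alone; this is implicit from the finite-dimensional construction in the proof of Lemma \ref{thm:main1} and will be used here as well.
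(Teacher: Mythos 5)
Your proposal follows essentially the same route as the paper's proof: Holmgren duality via a finite-dimensional backward Kolmogorov equation with coefficients frozen along one solution, a maximum-principle/Bernstein-type gradient bound on the dual solution, a spatial cutoff whose commutator terms are controlled by the moment bound from $M_T(V)$, the Lipschitz-in-$W_1$ estimate $\|\tilde{B}(z,\mu_s)-\tilde{B}(z,\nu_s)\|\le L_K W_1(\mu_s,\nu_s)$, and a Gr\"onwall closure in $W_1$. The only differences are cosmetic (the paper inserts an extra $\varepsilon$-approximation layer $\hat{B}_{\mu,N}$ of the drift and states the conclusion as a stability estimate $W_1(\mu_t,\nu_t)\le C\,W_1(\mu_0,\nu_0)$ for possibly different initial data), so the proposal is a faithful blind reconstruction of the paper's argument.
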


\begin{proof}
Assume that $(\mu_t)_{t\in[0,T]}\in M_T(V)$ and  $(\nu_t)_{t\in[0,T]}\in M_T(V)$ are solutions to (\ref{FPE-1}) with initial conditions $\mu_0\in \mathcal{P}_1(\mathcal{H})$ and $\nu_0\in \mathcal{P}_1(\mathcal{H})$ respectively. Fix a function $\psi_0 \in  \mathcal{F}C_0^\infty(\mathcal{H})$ such that $|\nabla \psi_0(z)|\leq1.$  Fix $N\in \mathbb{N}$ such that $ \psi(z)= \psi_0(P_Nz)$.
Notice that $\tilde{B}_N(z,\mu)= P_N\tilde{B}(z,\mu)$ and $\tilde{B}(z,\mu)=(0, \int_HK(u-u_1)\mu(u_1,v))$,  then fix $\varepsilon >0$, by {\bf H$_2$} and {\bf H$_2$}, there exists a smooth finite dimensional approximating sequence  $\hat{B}_{\mu, N}\in C^\infty(\mathbb{R}^{2N}, [0,T])$ such that for every $\nu \in M_T(V)$, we have $\hat{B}_{\mu, N}\in L^1(\mathcal{H}, \mu+\nu) $, and
\begin{eqnarray}\label{F-1}
\int_0^T \int_{\mathcal{H}} |\tilde{B}_{N}(z_t,\mu )-\hat{B}_{\mu, N}(P_N z_t)|(\mu_t+\nu_t)dz dt < \varepsilon.
\end{eqnarray}
Similarly, let  $\hat{A}_N:  z \rightarrow (\tilde{A}^iP_Nz_t)_{1 \leq {i} \leq N},$  $\hat{Q}_N: z \rightarrow ( \tilde{a}^{ij}(P_Nz_t))_{1 \leq {i,j} \leq N},$ then
$$\lim_{N\rightarrow \infty}\hat{A}_Nz = \tilde{A}z, \\\  \lim_{N\rightarrow \infty}\hat{Q}_Nz = \tilde{Q}z,$$
Fixed a function $\phi \in C_0^\infty(\mathbb{R}^1)$ such that $0\leq \phi (u)\leq1$ for $u \in \mathbb{R}^1$, and $\phi(u)=1$, for $|u|<1$,  and $\phi(u)=0$, for $|u|>2$, moreover, for all $u \in \mathbb{R}^1$, there exists a constant $C$, such that $|\phi''(u)|^2+|\phi'(u)|^2 \leq C\phi(u)$. For each $M>0$, set
$\phi_M(t,z) := \phi(t/M)\cdot \phi(|z|/M)$.
Now, we split several steps to prove the theorem.

Step 1. "The adjoint problem".  For $t\in [0,T]$, suppose $s\in [0,t]$, the equation
\begin{eqnarray}\label{UN-1}
\partial_s f_N + \hat{L}_{\mu} f_N = 0.   \quad  and \quad  f|_{s=t}= \psi,  \quad s\in[0,t],
\end{eqnarray}
with
$$\hat{L}_{\mu} f_N := Tr (\hat{Q}_N(z)D^2 f_N)  +  \langle \hat{A}_Nz+  \hat{B}_{\mu, N}(z) , D f_N  \rangle ,$$
has a solution $f_N $ in $\mathbb{R}^{2N}$,  and $f=f_N \in C^{2,1}(\mathbb{R}^{2N}\times[0,t])$. Indeed, the stochastic differential equation in $\mathbb{R}^{2N}$,
$$Z_t^N= \hat{A}_N Z_t^N dt + \hat{B}_{\mu,N}( Z_t^N) {d}t +\hat{\sigma}_N (Z_t^N) {d}W_t , \\\ Z_0^N=z ,$$
has a solution $Z_t^N$, $t \geq 0$, and the function $ f(s,z)= \mathbb{E}(\psi(Z_t^N) \big| Z_s^N = z)$ solves the (\ref{UN-1}). Moreover, $|f|\leq \max|\psi|:=C(\psi)$.

Step 2. let  $\Phi= \phi_M f$, then plugging $\Phi$ into (\ref{DEF-1}) for solution  $(\mu_t)_{t\in[0,T]}$,
$$\int \phi_M (t, z) \psi(z){d}\mu_t = \int \phi_M (0, z) f(0, z){d}\mu_0 + \int_0^t \int [\partial_s( \phi_M f) + L_{\mu}( \phi_M f) ]{d}\mu_s \mathrm{d}s$$
$$ L_{\mu}( \phi_M f) = Tr(\tilde{Q}(z)D^2( \phi_M f))  +   \langle \tilde{A}z+  \tilde{B}(z,\mu) , D( \phi_M f)  \rangle , $$
notice that $\partial_s f_N + \hat{L}_{\mu} f_N = 0$, then
$$\partial_s( \phi_M f) = (\partial_s \phi_M)  f + (\partial_s f) \phi_M =  (\partial_s \phi_M)  f + (-\hat{L}_{\mu} f) \phi_M $$
$$ =( \partial_s \phi_M)  f  - \phi_M ( Tr(\hat{Q}_N(z)D^2 f)  +  \langle \hat{A}_Nz+  \hat{B}_{\mu, N}(z)  ,  f \rangle),$$
since
$$D^2( \phi_M f)  = \nabla\cdot\nabla( \phi_M f)= \phi_M \cdot\triangle f + \triangle \phi_M\cdot f + 2 \nabla f \cdot\nabla \phi_M , $$
hence
\begin{eqnarray}\label{e-1}
&&\int \phi_M (t, z) \psi(z){d}\mu_t = \int \phi_M (0, z) f(0, z){d}\mu_0 + 2 \int_0^t \int  \langle  (Tr\tilde{Q}(z) )\nabla \phi_M  ,  \nabla f \rangle{d}\mu_s {d}s  \nonumber\\
&+& \int_0^t \int  \phi_M   \langle  \tilde{B}(z,\mu) -  \hat{B}_{\mu, N}(z)   ,   \nabla f \rangle {d}\mu_s {d}s + \int_0^t \int  \phi_M   \langle  \tilde{A}z -  \hat{A}_{ N}z  ,   \nabla f \rangle {d}\mu_s {d}s \nonumber\\
&+&  \int_0^t \int \phi_M  Tr(\tilde{Q}(z)- \hat{Q}_N(z))\triangle f  d\mu_s {d}s +   \int_0^t \int     f( \partial_s \phi_M )+ f L_\mu \phi_M  d\mu_s {d}s .
\end{eqnarray}
Similarly for  solution  $(\nu_t)_{t\in[0,T]}$, then
\begin{eqnarray}\label{e-2}
&&\int \phi_M (t, z) \psi(z){d}\nu_t = \int \phi_M (0, z) f(0, z){d}\nu_0 + 2 \int_0^t \int  \langle  (Tr \tilde{Q}(z)) \nabla \phi_M  ,  \nabla f \rangle{d}\nu_s {d}s  \nonumber\\
&+& \int_0^t \int  \phi_M   \langle  \tilde{B}(z,\nu) -  \hat{B}_{\mu, N}(z)  ,   \nabla f \rangle {d}\nu_s {d}s + \int_0^t \int  \phi_M   \langle  \tilde{A}z -  \hat{A}_{ N}z  ,   \nabla f \rangle {d}\nu_s {d}s \nonumber\\
&+&  \int_0^t \int \phi_M  Tr(\tilde{Q}(z)- \hat{Q}_N(z))\triangle f  d\nu_s {d}s +   \int_0^t \int     f( \partial_s \phi_M )+ f L_\mu \phi_M  d\nu_s {d}s .
\end{eqnarray}
Subtracting the equation (\ref{e-2}) from the equation (\ref{e-1}), then
\begin{eqnarray}
&& \int \phi_M (t, z) \psi(z) {d}(\mu_t - \nu_t) \leqslant \int | \phi_M f |{d}(\mu_0 - \nu_0)+ \int_0^t \int  |f ||L_\mu\phi_M | {d}(\mu_s +\nu_s){d}s  \nonumber\\
&+ &  \int_0^t \int \big[ \phi_M |  \tilde{B}(z,\mu) -  \hat{B}_{\mu,N}(z)  ||\nabla f |+   \phi_M   |\langle  \tilde{A}z -  \hat{A}_{ N}z  ,   \nabla f \rangle |\big]({d}\mu_s + {d}\nu_s) {d}s  \nonumber\\
&+&   \int_0^t \int  \phi_M | \tilde{B}(z,\nu) -  \tilde{B}(z,\mu)  ||\nabla f | {d}\nu {d}s + 2 \int_0^t \int  Tr(\tilde{Q}(z))|\nabla \phi_M | |\nabla f| {d}(\mu_s +\nu_s){d}s \nonumber\\
&+&  \int_0^t \int \phi_M  |Tr(\tilde{Q}(z)- \hat{Q}_N(z))\triangle f| (d\mu_s+d\nu_s) {d}s .
 \end{eqnarray}

Step 3. In this step, we show that $\nabla f$ is bounded. Using {\bf H$_1$}(2), there exists a constant $\varpi>0$, such that $| \nabla \hat{Q}_N(z) |\leq \varpi.$  Let  $G_{\mu,N}(z,t)= \hat{A}_Nz+  \hat{B}_{\mu,N}(z)$,  then there exists $\tilde{\alpha}$ such that
$$\langle \mathcal{G}(t,z)z', z'\rangle \leq  \tilde{\alpha}|z'|^2 \\\ where \\\ \mathcal{G}= (\partial_{z_j}G^i_{\mu,N})_{i,j\leq N} .$$
Now, let $\chi(t,z) = (\nabla f)^2 + \kappa   f^2 $, then
\begin{eqnarray}\label{e-3again}
-(\partial_s + \tilde{L}_{\mu})\chi & = & \nabla f ( \nabla  (Tr\hat{Q}_N(z)) ) \cdot  \triangle f ))+ 2\nabla f \langle \nabla G ,  \nabla f \rangle
- 2  (Tr\hat{Q}_N(z))  (\triangle f)^2 - 2\kappa   (Tr\hat{Q}_N(z))   (\nabla f)^2  \nonumber\\
&\leqslant & | \nabla  (Tr\hat{Q}_N(z))  |c^{-1}  (\nabla f)^2 + c  (\triangle f)^2 + 2\tilde{\alpha}(\nabla f)^2  - 2\theta (\triangle f)^2 - 2\kappa   \theta (\nabla f)^2  \nonumber\\
&\leqslant &  \varpi c^{-1}  (\nabla f)^2 + c  (\triangle f)^2 + 2\tilde{\alpha}(\nabla f)^2 - 2\theta (\triangle f)^2 - 2\kappa    \theta (\nabla f)^2,
 \end{eqnarray}
let $c=2\theta  $ and  $\kappa    = (\varpi c^{-1}  + 2\tilde{\alpha}) /(2\theta)$.
Then
$$-(\partial_s + \tilde{L}_{\mu} )\chi  \leqslant 0$$
Using the maximum principle\cite[Therorem 3.1.1]{DWS},
$$\max_{\mathbb{R}^{2N}\times[0,T]}|\chi(z,t) | \leqslant \max_{\mathbb{R}^{2N}}|\chi (z)| \leqslant \max_{\mathbb{R}^{2N}}( |\nabla \psi| ^2 + \kappa  |\psi|^2  ) ,$$
hence
$$\sup_{\mathbb{R}^{2N}\times[0,T]}|\nabla f| \leqslant [\max_{\mathbb{R}^{2N}}(|\nabla \psi| ^2 + \kappa  |\psi|^2 )]^{1/2} =: \tilde{C}.$$
Using equation~(\ref{e-3again}), we can easily obtain $\sup_{(z,t)\in\mathbb{R}^{2N}\times[0,T]}|\partial_{z_i}\partial_{z_j}f|\leq C(\psi)$, one can also see Theorem 2.8\cite{NVKE}.

Step 4. Taking limits as $M\rightarrow \infty$, $N\rightarrow \infty$.
By the maximum principle and the Arzel\`{a}-Ascoli theorem, the sequence ${f_N(0,z)}$ has a subsequence converging on compact sets. In particular, ${f_N(0,z)}\rightarrow \tilde{f}(z)\in C^1_b(\mathcal{H})$ as $N\rightarrow \infty.$ Then
$$\int | \phi_M  \tilde{f}|{d}(\mu_0 - \nu_0) \leqslant C_1W_1(\mu_0 , \nu_0). $$
Since $\nabla f$ and $\triangle f$ are bounded, then
$$\int_0^t\int  \phi_M | \tilde{B}(z,\mu) -  \hat{B}_{\mu,N}(z)  ||\nabla f | {d}(\mu_s +\nu_s){d}s  < \varepsilon ,$$
$$\int_0^t \int  \phi_M   \langle  \tilde{A}z -  \hat{A}_{ N}z  ,   \nabla f \rangle ({d}\mu_s + {d}\nu_s) {d}s  \rightarrow 0, \\\ as \\\ N \rightarrow \infty , $$
$$ \int_0^t \int \phi_M  |Tr(\tilde{Q}(z)- \hat{Q}_N(z))\triangle f| (d\mu_s+d\nu_s) {d}s\rightarrow 0, \\\ as \\\ N \rightarrow \infty .$$
Using  {\bf H$_2$} and  {\bf H$_3$}
\begin{eqnarray}
\int_0^t \int  \phi_M | \tilde{B}(z,\nu) - \tilde{B}(z,\mu)  ||\nabla f | {d}\nu {d}s \leqslant  C L_K \int_0^T  \int  W_1(\mu_s, \nu_s) {d}\nu_s{d}s
\end{eqnarray}
For $L_\mu\phi_M$,
$$|L_\mu\phi_M|\leq | \tilde{L}_\mu\phi_M | + |L_\mu\phi_M - \tilde{L}_\mu\phi_M | ,$$
using  the definition of $\phi_M$, then
$$\lim_{N\rightarrow \infty}|\langle  \tilde{B}(z,\mu)- \hat{B}_{\mu,N}(z), \nabla \phi_M\rangle| +|\langle  \tilde{A}(z)- \hat{A}_{N}(z), \nabla \phi_M\rangle|  =0,$$
and
$$\lim_{N\rightarrow \infty}|\langle Tr(  \tilde{Q}(z)- \hat{Q}_{N}(z) ), \triangle \phi_M\rangle| =0.$$
Notice that
$$\tilde{L}\phi_M = 1/M \phi_M'(|z|/M) \langle\hat{A}_{N}(z)+ \hat{B}_{\mu,N}(z) , z/|z| \rangle + 1/M^2 \phi_M''(|z|/M) Tr(\hat{Q}_N(z)), $$
for a fixed $N$,
$$|\tilde{L}\phi_M| \leq \big[1/M | \hat{A}_{N}(z) +\hat{B}_{\mu,N}(z)| + 1/M^2| Tr \hat{Q}_N(z)|\big]I_{\{M\leq |z| \leq 2M\} }\leq  CI_{\{M\leq |z| \leq 2M\} }.$$
Thus
$$2 \int_0^T \int  |f ||L_\mu\phi_M | {d}(\mu_s +\nu_s){d}s \rightarrow 0, \\\ as \\\ M\rightarrow \infty, N\rightarrow \infty  .$$
Similarly,
$$\int_0^T \int |Tr( \tilde{Q}(z) )\nabla \phi_M | |\nabla f| {d}(\mu_s +\nu_s){d}s \rightarrow 0, \\\ as \\\ M\rightarrow \infty . $$
Step 5. The estimation. Since $\varepsilon$ is an arbitrary number, from what have been proved, then
$$W_1(\mu_t , \nu_t) \leqslant  C\int_0^T W_1(\mu_s, \nu_s)\mathrm{d}s + CW_1(\mu_0, \nu_0), $$
using Gronwall inequality
$$W_1(\mu_t , \nu_t)\leqslant CW_1(\mu_0, \nu_0) . $$
The proof is completed. $\square$
\end{proof}

\begin{corollary}\label{mut_1}
Given $T > 0 $. Assume {\bf H$_1$}-{\bf H$_3$}  hold.  Let $( u_t^{i,  N}, v_t^{i,  N})$ and $( u_t, v_t)$ be  solutions of equations~(\ref{e:X-eps-1})--(\ref{e:X-eps-2}) and~(\ref{e:SLE-11})--(\ref{e:SLE-2}) respectively, For every $t\in [0,T]$,
$$\lim_{N\rightarrow \infty }\mathbb{E}\left[W_1(\Gamma_t^{ N}, \mu_t)\right]  =0.$$
\end{corollary}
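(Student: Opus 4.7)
The plan is to couple the $N$-particle system with an auxiliary i.i.d. system and combine a propagation of chaos estimate with a law of large numbers for empirical measures in $W_1$.

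First, I would introduce $N$ independent copies $(\hat{u}_t^i,\hat{v}_t^i)_{1\leq i\leq N}$ of the limit SDE (\ref{e:SLE-11})--(\ref{e:SLE-2}), driven by the same cylindrical Wiener processes $W_t^i$ as the particle system (\ref{e:X-eps-1})--(\ref{e:X-eps-2}) and with independent initial data distributed according to $\mu_0$. By Lemma \ref{thm:main1} the one-time law of each $(\hat{u}_t^i,\hat{v}_t^i)$ solves (\ref{FPE-1}) with initial datum $\mu_0$, so Theorem \ref{thm:main2} forces it to coincide with $\mu_t$. Setting $\hat{\Gamma}_t^N := \tfrac{1}{N}\sum_{i=1}^N \delta_{(\hat{u}_t^i,\hat{v}_t^i)}$, the triangle inequality yields
\[
\mathbb{E}[W_1(\Gamma_t^N,\mu_t)] \leq \mathbb{E}[W_1(\Gamma_t^N,\hat{\Gamma}_t^N)] + \mathbb{E}[W_1(\hat{\Gamma}_t^N,\mu_t)],
\]
and it suffices to drive each term to zero.

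For the first term I would exploit the natural coupling between the two empirical measures. The dual form of $W_1$ from Definition \ref{def:P1}, tested against $1$-Lipschitz functions, gives
\[
W_1(\Gamma_t^N,\hat{\Gamma}_t^N) \leq \frac{1}{N}\sum_{i=1}^N \|(u_t^{i,N},v_t^{i,N})-(\hat{u}_t^i,\hat{v}_t^i)\|_{\mathcal{H}},
\]
and exchangeability of the indices reduces the expected bound to $\mathbb{E}\|(u_t^{1,N},v_t^{1,N})-(\hat{u}_t^1,\hat{v}_t^1)\|_{\mathcal{H}}$. The position piece is exactly the Liu--Wang propagation of chaos estimate \cite{lw} recalled in the introduction; the velocity piece I would obtain by subtracting the two velocity equations, invoking the Lipschitz bounds on $F$ and $\sigma$ from {\bf H$_1$}-{\bf H$_3$}, and closing with Gronwall's inequality, using the position estimate as input.

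For the second term, $\hat{\Gamma}_t^N$ is the empirical measure of $N$ i.i.d. random variables of law $\mu_t$ on the separable Hilbert space $\mathcal{H}$, so Varadarajan's theorem yields $\hat{\Gamma}_t^N \rightharpoonup \mu_t$ almost surely. Upgrading this to convergence in $\mathbb{E}[W_1(\cdot,\cdot)]$ requires uniform integrability of the coordinate functions, which follows from the moment bound $\sup_{t\in[0,T]}\int V\,d\mu_t < \infty$ inherent to $\mu_t\in M_T(V)$, together with the analogous moment estimate for the particle system obtained by an It\^o computation on $\|(u_t^{i,N},v_t^{i,N})\|_{\mathcal{H}}^2$. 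The principal obstacle will be this last step: sharp Fournier--Guillin rates for $W_1$-convergence of empirical measures degenerate in infinite dimension, so I would argue qualitatively by truncating each $1$-Lipschitz test function outside a large ball in $\mathcal{H}$, controlling the tail contribution with the $V$-moment, and applying weak convergence to the truncated part.
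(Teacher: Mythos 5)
Your proposal is essentially correct but follows a genuinely different route from the paper. The paper's proof never introduces an auxiliary i.i.d. system: it fixes a cylindrical test function, projects the particle system and the measure onto $\mathbb{R}^{2m}$ via $P_m$, invokes Lemma~10 of Oelschl\"ager \cite{Karl oe} for the resulting finite-dimensional weakly interacting system to get $\lim_{N\to\infty}\mathbb{E}[W_1(\Gamma^N_{t(m)},\mu_{t(m)})]=0$, and then appeals to ``the arbitrariness of $m$'' to pass to $\mathcal{H}$. Your synchronous-coupling decomposition $\mathbb{E}[W_1(\Gamma^N_t,\mu_t)]\le \mathbb{E}[W_1(\Gamma^N_t,\hat\Gamma^N_t)]+\mathbb{E}[W_1(\hat\Gamma^N_t,\mu_t)]$ is the classical Sznitman argument; it is more self-contained in that it makes explicit where Theorem~\ref{thm:main2} enters (identifying the one-time marginal of the McKean--Vlasov solution with $\mu_t$ via uniqueness), and it avoids the two points the paper leaves implicit, namely that the $P_m$-projected particle system is not actually a closed system of Oelschl\"ager type (the Laplacian and the coefficients do not commute with $P_m$) and that the interchange of the limits $m\to\infty$ and $N\to\infty$ requires a uniformity or tightness argument. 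What the paper's route buys is brevity and the ability to cite a finite-dimensional law of large numbers wholesale; what yours buys is a transparent quantitative structure, at the price of having to prove full propagation of chaos including the velocity component rather than only the position estimate recalled from \cite{lw}.

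One step in your sketch would not close as written: subtracting the two velocity equations produces the term $\triangle(u^{i,N}_t-\hat u^i_t)$, and $\|\triangle(u^{i,N}_t-\hat u^i_t)\|_H$ is not controlled by $\|u^{i,N}_t-\hat u^i_t\|_H$, so a Gronwall argument on $\|(u^{i,N}_t,v^{i,N}_t)-(\hat u^i_t,\hat v^i_t)\|^2_{\mathcal{H}}$ alone fails. You need the energy functional $\|u^{i,N}_t-\hat u^i_t\|^2_H+\|\nabla(u^{i,N}_t-\hat u^i_t)\|^2_H+\|v^{i,N}_t-\hat v^i_t\|^2_H$, in which the cross terms coming from $\triangle$ cancel after integration by parts, and then Gronwall applies using only the Lipschitz hypotheses {\bf H$_1$}--{\bf H$_3$}; this still dominates the $\mathcal{H}$-distance appearing in $W_1$, so the rest of your argument goes through. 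The second half of your proof (Varadarajan plus truncation against the $V$-moment bound, with $V=1+|Z|^2$ supplying the uniform integrability of first moments) is sound for the cylindrical dual formulation of $W_1$ used in Definition~\ref{def:P1}.
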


\begin{proof}
Fixed a function $\varphi(z)=\varphi(z_1.z_2,...,z_m)\in \mathcal{F}C_0^\infty(\mathcal{H}),$ that is $\varphi(z)\in C_0^\infty(\mathbb{R}^{2m})$, then $\mu^m$ is
the solution of finite dimensional Fokker-Planck equation
\begin{equation}
\partial_t \mu_t + \nabla \cdot ( \tilde{A}_mz+\tilde{B}_m(z,\mu_t))\mu = Tr( \tilde{Q}_m \triangle \mu_t), \\\ \\\ \mu_0^m= \mu_0\circ P_m^{-1}.
\end{equation}
We define  $\Gamma_{t(m)}^{\epsilon, N} =  \frac{ {1}}{N}\sum_{i=1}^N \delta_{(u_{t(m)}^{i, N} ,v_{t(m)}^{i, N}) }$, here $u_{t(m)}^{i ,N}= P_m u_t^{i, N}$ and   $v_{t(m)}^{i, N}= P_m v_t^{i,   N},$
we replace $( u_t^{i,   N}, v_t^{i,  N})$ with $(u_{t(m)}^{i, N} ,v_{t(m)}^{i, N})$ for equations~(\ref{e:X-eps-1})--(\ref{e:X-eps-2}),
then we obtain an interacting particles system on $\mathbb{R}^{2m}$, by the Lemma 10 of \cite{Karl oe},
$$\lim_{N\rightarrow \infty }\mathbb{E}\left[W_1(\Gamma_{t(m)}^{ N}, \mu_{t(m)})\right]  =0.$$
Due to the arbitrariness of $m$, then
$$\lim_{N\rightarrow \infty }\mathbb{E}\left[W_1(\Gamma_t^{  N}, \mu_t)\right]  =0.$$

\end{proof}

\begin{remark}
We obtain the asymptotic behavior of the sequence of empirical measures $\Gamma_t^{ N}$ by showing the  existence and uniqueness of the corresponding  nonlinear Fokker-Planck equation. One of the similarly studied particle system  model which solves the nonlinear equation by the McKean-Vlasov martingale problem, introduced in \cite{AGBG}.

\end{remark}

\end{document}